
\documentclass[reqno]{amsart}

\usepackage{amscd,amsmath,amssymb,amsfonts,bm}
\usepackage{stmaryrd}
\usepackage{bbold}
\usepackage{times}
\usepackage[all]{xy}
\usepackage{mathrsfs}
\usepackage{graphicx}

\usepackage{amsthm}

\textwidth=34pc
\oddsidemargin=30pt
\evensidemargin=30pt


\theoremstyle{plain}
\newtheorem{thm}{Theorem}
\newtheorem{lem}[thm]{Lemma}
\newtheorem{cor}[thm]{Corollary}
\newtheorem{prop}[thm]{Proposition}

\theoremstyle{definition}

\newtheorem*{ack}{Acknowledgment}
\newtheorem*{notat}{Notations and conventions}

\theoremstyle{remark}
\newtheorem*{remark}{Remark}


\newcommand{\Hom}{\operatorname{Hom}}

\newcommand{\Aut}{\operatorname{Aut}}

\newcommand{\soc}{\operatorname{soc}}

\newcommand{\Spec}{\operatorname{Spec}}

\newcommand{\trdeg}{\operatorname{tr\,deg}}

\newcommand{\ch}{\operatorname{char}}

\newcommand{\onto}{\twoheadrightarrow}

\newcommand{\into}{\hookrightarrow}

\newcommand{\Mat}{\operatorname{M}}

\DeclareMathOperator{\Rat}{Rat} \DeclareMathOperator{\Prim}{Prim}
\DeclareMathOperator{\Max}{Max}
\DeclareMathOperator{\PIdeg}{PI\,deg}
\DeclareMathOperator{\GKdim}{GK\,dim}

\renewcommand{\k}{\mathbb{k}}
\renewcommand{\phi}{\varphi}

\newcommand{\GSpec}{G\text{-}\!\Spec}

\newcommand{\GRat}{G\text{-}\!\Rat}

\newcommand{\G}{\mathcal{G}}
\newcommand{\cO}{\mathcal{O}}

\newcommand{\cC}{\mathcal{C}}
\newcommand{\cR}{\mathcal{R}}

\newcommand{\fp}{\mathfrak{p}}

\newcommand{\fa}{\mathfrak{a}}

\newcommand{\e}{\varepsilon}

\newcommand{\ZZ}{\mathbb{Z}}

\newcommand{\cat}[1]{\operatorname{\mathsf{#1}}}
\newcommand{\Mod}{\cat{Mod}}
\newcommand{\fgmod}{\cat{mod}}

\newcommand{\Rep}{\cat{Rep}}
\newcommand{\rep}{\cat{rep}}

\newcommand{\irr}{\cat{irr}}

\newcommand{\cen}{\mathcal{Z}}

\newcommand{\bq}{\mathbf{q}}
\newcommand{\bfm}{\mathbf{m}}

\newcommand{\byG}{\!\!:\!\! G}

\newdir{ >}{{}*!/-5pt/\dir{>}}

\renewcommand{\labelenumi}{(\alph{enumi})}


\begin{document}

%
%

\title[Rational group actions on affine PI-algebras]%
{Rational group actions on affine PI-algebras}

\author{Martin Lorenz}

\address{Department of Mathematics, Temple University,
    Philadelphia, PA 19122}

\email{lorenz@temple.edu}

\urladdr{http://www.math.temple.edu/$\stackrel{\sim}{\phantom{.}}$lorenz}

\thanks{Research of the author supported in part by NSA Grant H98230-09-1-0026}

\subjclass[2010]{}

\keywords{}

\maketitle

\begin{abstract}
Let $R$ be an affine PI-algebra over an algebraically closed field $\k$ and 
let $G$ be an affine algebraic $\k$-group that acts rationally by algebra automorphisms on $R$.
For $R$ prime and $G$ a torus, we show that $R$ has only finitely many 
$G$-prime ideals if and only if the action of 
$G$ on the center of $R$ is multiplicity free. This extends a standard result on affine algebraic
$G$-varieties. Under suitable hypotheses on $R$ and $G$, we also prove
a PI-version of a well-known result on spherical varieties
and a version of Schelter's catenarity theorem for $G$-primes.
\end{abstract}



\section{Introduction}

\subsection{} \label{SS:question}
This article 
addresses the 
following general question:
\begin{quote}
Suppose a group $G$ acts by automorphisms on a ring $R$.
When is the set $\GSpec R$ consisting
of all $G$-prime ideals of $R$ is finite?
\end{quote}
Recall that a proper $G$-stable (two-sided) ideal $I$ of $R$ is called
\emph{$G$-prime} if $AB \subseteq I$ for $G$-stable ideals $A$ and
$B$ of $R$ implies that $A \subseteq I$ or $B \subseteq I$.
For a noetherian algebra $R$ over a field $\k$ and
an algebraic $\k$-torus $G$ acting rationally on $R$ by $\k$-algebra automorphisms,
the above question was stated as Problem II.10.6  in \cite{kBkG02}.


\subsection{} \label{SS:motivation}
Now assume that $R$ is an associative algebra over an algebraically closed base field $\k$
and $G$ is an affine algebraic $\k$-group 
that acts rationally by $\k$-algebra automorphisms on $R$; 
see \ref{SS:ratact} below for a brief reminder on rational actions. 

The question in \ref{SS:question} is motivated in part by the
stratification of  $\Spec R$ that is induced by the action of $G$.
Namely, there is a surjection
\begin{equation*} \label{E:gamma}
\Spec R \onto \GSpec R 
\end{equation*}
sending $P$ to the largest
$G$-stable ideal of $R$ that is contained in $P$, 
and \cite[Theorem  9]{mL09} gives
a precise description of the fibers of this map in terms
of \emph{commutative} algebras.
Hence, from a noncommutative perspective, 
the focus shifts to the description of $\GSpec R$, with finiteness being the optimal scenario.
It turns out that, as long as the deformation parameters are
chosen in a sufficiently generic manner, $\GSpec R$ 
is indeed finite for all quantized coordinate
algebras $R = \cO_\bq(X)$ that have been analyzed in detail thus far, 
the acting group $G$ typically being a suitably chosen
algebraic torus.
Notable examples include the (generic) quantized coordinate rings of
all semisimple algebraic groups (Joseph \cite{aJ95}, Hodges,
Levasseur and Toro \cite{tHtLmT97}), quantum matrices and quantum
Grassmannians (Cauchon, Lenagan and others; e.g, \cite{gC03a},
\cite{gC03b} and \cite{LLR08}). 
Finiteness of $\GSpec R$ has also been observed in Leavitt
path algebras $R$, again for the action 
of a suitable torus $G$ \cite{ABR}.
These finiteness results all
depend either on finding a presentation of
$R$ as an iterated skew polynomial algebra, a class of algebras for which finiteness has
been established by
Goodearl and Letzter \cite{kGeL94}, \cite{kGeL00}, or else on long
calculations in $R$. A general finiteness criterion for $\GSpec R$
is currently lacking. 


\subsection{} \label{SS:motivation2}
Our main focus in this note will be on the case where $R$ is an affine PI-algebra over $\k$.
This will be assumed for the remainder of the Introduction, and
$G$ will be an affine algebraic $\k$-group that acts rationally by $\k$-algebra automorphisms 
on $R$ as in \ref{SS:motivation}. 

In order to give the finiteness problem in \ref{SS:question} 
a geometric perspective, we mention
the following connection with $G$-orbits of rational ideals. 
Here, a prime ideal $P$ of $R$ is called \emph{rational} if 
$\cC(R/P) = \k$, where $\cC(\,.\,)$ denotes the center of the classical ring of quotients. Rational primes
are exactly the closed points of $\Spec R$;
see \ref{SSS:DM} below for several equivalent characterizations of rationality.
An ideal $P \in \GSpec R$ is said to be \emph{$G$-rational} if
$\cC(R/P)^G = \k$. 
The subset of $\Spec R$ consisting of all rational primes of $R$ will be denoted by 
$\Rat R$, and $\GRat R$ will denote the subset of $\GSpec R$ consisting of all $G$-rational ideals.
Since $R$ satisfies the ascending chain condition for 
semiprime ideals (\ref{SSS:semiprime}), the Nullstellensatz (\ref{SSS:Nst}) and the 
Dixmier-M{\oe}glin equivalence (\ref{SSS:DM}), the following proposition is 
a special case of \cite[Proposition 14]{mL09}.

\begin{prop} \label{P:finite}
Let $R$ be an affine PI-algebra over the algebraically closed field $\k$ and 
let $G$ be an affine algebraic $\k$-group that acts rationally by $\k$-algebra automorphisms on $R$.
Then the following are equivalent:
\begin{enumerate}
\renewcommand{\labelenumi}{(\roman{enumi})}
\item $\GSpec R$ is finite;
\item $\GRat R$ is finite;
\item $G$ has finitely many orbits in $\Rat R$;
\item $\GRat R = \GSpec R$.
\end{enumerate}
\end{prop}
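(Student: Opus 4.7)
The plan is to deduce the proposition from the general four-way equivalence \cite[Proposition 14]{mL09}. That result applies to any Noetherian $G$-algebra $R$ satisfying three hypotheses: the ascending chain condition on semiprime ideals, a Nullstellensatz in the form $\cC(R/P)=\k$ for every primitive ideal $P$, and the Dixmier-Moeglin equivalence. So the work splits into two pieces: verify that these three hypotheses hold for $R$ an affine PI-algebra over algebraically closed $\k$ carrying a rational action of an affine algebraic $\k$-group $G$, and then recall the shape of the argument in \cite[Proposition 14]{mL09} to see that it delivers exactly (i)-(iv).

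The three hypotheses are already signposted in the text and rest on classical PI-theory. The ACC on semiprime ideals (\ref{SSS:semiprime}) follows from Braun's nilpotence of the Jacobson radical together with finiteness of minimal primes over any ideal in an affine PI-ring; the Nullstellensatz (\ref{SSS:Nst}) is Kaplansky's theorem combined with Amitsur-Procesi, yielding $\cC(R/P)=\k$ for every primitive (equivalently maximal, equivalently rational) $P$ because $\k$ is algebraically closed; and the Dixmier-Moeglin equivalence (\ref{SSS:DM}) for affine PI-algebras is Moeglin's theorem.

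Granting these inputs, the argument of \cite[Proposition 14]{mL09} may be outlined as follows. The inclusion $\GRat R \subseteq \GSpec R$ gives (i) $\Rightarrow$ (ii) for free. For (ii) $\Rightarrow$ (iii) one uses the fiber description of $\Spec R \onto \GSpec R$ from \cite[Theorem 9]{mL09}: rational primes above a fixed $Q \in \GRat R$ correspond to closed $G$-orbits on the commutative $\k$-algebra $\cC(R/Q)$, and since $\cC(R/Q)^G=\k$, a standard invariant-theoretic argument for $G$-varieties yields finitely many orbits above each $Q$; combined with (ii) this gives (iii). For (iii) $\Rightarrow$ (iv) one writes each $Q \in \GSpec R$ as $Q = \bigcap_{g \in G} g.P$ for a suitable rational prime $P$, and the orbit-finiteness hypothesis together with the Dixmier-Moeglin equivalence forces $\cC(R/Q)^G=\k$. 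Finally (iv) $\Rightarrow$ (i) follows because, under (iv), every $G$-prime is locally closed in $\GSpec R$, and $\GSpec R$ is noetherian by the ACC input, so finiteness is immediate.

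The principal obstacle is the fiber step, where one must pass cleanly between noncommutative data (rationality of primes $P$ and $G$-rationality of the image $G$-prime $Q$) and commutative-geometric data (a rational $G$-action on $\Spec \cC(R/Q)$ with trivial invariant field). Here the rationality of the $G$-action on $R$ is essential, so that the induced action on each centre $\cC(R/Q)$ remains rational and amenable to standard finiteness theorems for $G$-varieties; and the algebraic closedness of $\k$ is used through Kaplansky's theorem to match $\cC(R/P)$ with $\k$ for rational $P$, which is precisely what links the orbit-count on $\Rat R$ to the finiteness of $\GSpec R$.
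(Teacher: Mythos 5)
Your proposal follows the paper's own route exactly: verify that an affine PI-algebra over the algebraically closed field $\k$ satisfies the ascending chain condition on semiprime ideals (\ref{SSS:semiprime}), the Nullstellensatz (\ref{SSS:Nst}) and the Dixmier--M{\oe}glin equivalence (\ref{SSS:DM}), and then invoke \cite[Proposition 14]{mL09}; this is precisely how the paper proves the proposition, and your additional sketch of the internal argument of that cited result is extra detail the paper does not reproduce. One small correction: the cited proposition does not require $R$ to be noetherian (affine PI-algebras need not be), only the three properties you verify, so the word ``Noetherian'' should be dropped from your description of its hypotheses.
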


\noindent
Thus, the problem at hand amounts to determining when all $G$-primes of $R$
are $G$-rational.

\subsection{} \label{SS:content}
In studying the finiteness question \ref{SS:question} we may assume without loss that $G$ is connected. 
In this case,
all $G$-primes of $R$ are actually prime, and hence $\GSpec R$ is  the set of all $G$-stable prime
ideals of $R$; see Lemma~\ref{L:ratact} below.
The main result of this note concerns the special case where $G$ is a torus; it extends a standard result 
on affine algebraic $G$-varieties \cite[II.3.3 Satz 5]{hK84} to PI-algebras.

\begin{thm} \label{T:torus}
Let $R$ be a prime affine PI-algebra over the algebraically closed field $\k$ and 
let $G$ be an algebraic $\k$-torus that acts rationally by $\k$-algebra automorphisms on $R$.
Then $\GSpec R$ is finite if and only if the action of $G$ on the center
$\cen(R)$ is multiplicity free.
\end{thm}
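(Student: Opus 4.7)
The plan is to reduce the problem from the noncommutative algebra $R$ to its center $Z = \cen(R)$, and then invoke the classical commutative result for torus actions on affine varieties \cite[II.3.3 Satz 5]{hK84}, which asserts that the coordinate ring of an affine $G$-variety (for $G$ a torus) is multiplicity free as a $G$-module if and only if $G$ has only finitely many orbits on the variety. Combined with Proposition~\ref{P:finite} applied to $Z$, the theorem then follows once $\GSpec R$ and $\GSpec Z$ are shown to be simultaneously finite.

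First, I would recall the standard facts about prime affine PI-algebras: the center $Z$ of $R$ is an affine integral $\k$-domain and $R$ is a finitely generated $Z$-module, so $Z \subseteq R$ is a finite integral extension. The rational $G$-action on $R$ stabilizes $Z$ and restricts to a rational action on $Z$. By the reduction noted in~\ref{SS:content}, we may assume $G$ is connected, in which case $\GSpec R$ is precisely the set of $G$-stable prime ideals of $R$, and similarly for $\GSpec Z$.

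Next, I would show that the contraction map
\[
\psi \colon \GSpec R \tto \GSpec Z, \quad P \longmapsto P \cap Z,
\]
is well-defined, surjective, and has finite fibers. That $\psi$ is well-defined follows from the $G$-stability of $Z$. Finite fibers are a consequence of incomparability for the integral extension $Z \subseteq R$ together with the fact that only finitely many primes of $R$ lie over any given prime of $Z$. For surjectivity, given $\fq \in \GSpec Z$, the finite set $\{P_1,\dots,P_n\}$ of primes of $R$ lying over $\fq$ is $G$-invariant; since $G$ is connected and acts on a finite set, the action must be trivial, so each $P_i$ is itself $G$-stable and lies in $\GSpec R$. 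Consequently, $\GSpec R$ is finite if and only if $\GSpec Z$ is finite.

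Finally, I would apply Proposition~\ref{P:finite} to the commutative affine $\k$-algebra $Z$: since $\k$ is algebraically closed, $\Rat Z$ is exactly the set of maximal ideals, i.e., the closed points of the affine variety $\Spec Z$. Hence $\GSpec Z$ is finite if and only if $G$ has finitely many orbits on $\Spec Z$, which by \cite[II.3.3 Satz 5]{hK84} is in turn equivalent to $Z = \k[\Spec Z]$ being multiplicity free as a rational $G$-module. The main obstacle is the surjectivity of $\psi$, which hinges crucially on the connectedness of $G$ together with the fact that rational group actions on a finite set of primes are necessarily trivial; without connectedness, a prime of $R$ over a $G$-stable prime of $Z$ need not itself be $G$-stable.
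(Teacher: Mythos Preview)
Your reduction to the center rests on the assertion that, for a prime affine PI-algebra $R$, the center $Z=\cen(R)$ is an affine $\k$-domain and $R$ is a finite $Z$-module. This is not a standard fact; it is false in general. Indeed, if $R$ were a finite module over $Z$, then by the Artin--Tate lemma $Z$ would be affine over $\k$, hence noetherian, and $R$ would be a noetherian ring. But the theorem is stated for arbitrary (not necessarily noetherian) prime affine PI-algebras, and such algebras need not be noetherian. So the very first step of your argument collapses precisely in the case the paper is trying to handle. The paper itself flags this: it remarks that the theorem is ``quite a bit easier'' under a noetherian hypothesis, and even then it passes not to $Z$ but to the commutative trace ring $T$, which \emph{is} always affine (Lemma~\ref{L:TR}), together with the finite centralizing extension $T\subseteq TR$.

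What the paper actually does is more delicate. Multiplicity freeness of $\cen(R)$ is shown to be equivalent to $G$-rationality of $R$ (Lemma~\ref{L:PIbounded}(b)), and this is then propagated to all $G$-primes. Regular $G$-primes are handled by a localization argument through the Formanek center (Lemma~\ref{L:regular}). Non-regular $G$-primes contain the conductor ideal $g_n(R)R$, which is a common ideal of $R$ and of the trace ring $TR$; one lifts a minimal such $G$-prime to $TR$, uses finiteness of $\GSpec TR$ (available via Proposition~\ref{P:TR} because $T$ is genuinely affine), and then descends by induction on the PI-degree. Your contraction map $\GSpec R\to\GSpec Z$ is the right shape of idea, but without finiteness of $R$ over $Z$ you cannot invoke Lying Over or Incomparability, and without affineness of $Z$ you cannot quote the commutative result for $\GSpec Z$.
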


\noindent
Here, multiplicity freeness means that, for each rational
character $\lambda \colon G \to \k^\times$, the weight space $\cen(R)_\lambda = 
\{ r \in \cen(R) \mid g.r = \lambda(g)r \text{ for all $g \in G$} \}$ has dimension at most $1$.

The proof of Theorem~\ref{T:torus} will be given in Section~\ref{S:main} after deploying some auxiliary results
and a generous amount of background material in Section~\ref{S:prelim}. We remark that, when $R$ is
also assumed noetherian, Theorem~\ref{T:torus} is quite a bit easier, being an immediate consequence 
of Proposition~\ref{P:TR} and Lemma~\ref{L:PIbounded}(b) below. We conclude, in Section~\ref{S:noethPI},
with two results for noetherian $R$, namely a PI-version of a standard result on spherical varieties
(Proposition~\ref{P:rationalPI}) and a version of Schelter's catenarity theorem for $G$-primes
(Proposition~\ref{P:catenary}).

\begin{notat} 
All rings have a $1$ which is inherited by subrings and preserved under
homorphisms. 
The action of the group $G$ on the ring $R$ will be written as
$G \times R \to R$, $(g,r) \mapsto g.r$.
For any ideal $I$ of $R$, we will write $I\byG = \bigcap_{g \in G} g.I$; this is the largest
$G$-stable ideal of $R$ that is contained in $I$. The symbol $\subset$ denotes a proper inclusion.
\end{notat}


\section{Preliminaries} \label{S:prelim}


\subsection{Finite centralizing ring extensions} \label{SS:centralizing}

A ring extension  $R\subseteq S$  is called \emph{centralizing} 
if $S=R\mathbf{C}_S(R)$ where $\mathbf{C}_S(R) = \{s \in S \mid
sr = rs \text{ for all $r \in R$}\}$. In this case, for any prime ideal $P$ of $S$,
the contraction $P \cap R$ is easily seen to be a prime ideal of $R$.
A centralizing extension $R\subseteq S$  is called \emph{finite}, if $S$ is finitely generated as
left or, equivalently, right $R$-module. By results of G. Bergman \cite{gBx}, \cite{gBxx} (see also \cite{RS81}), 
the classical relations of lying over and incomparability for prime ideals hold in any finite centralizing 
extension $R\subseteq S$:
\begin{itemize}
\item given $Q \in \Spec R$, there exists $P \in \Spec S$ such that $Q = P \cap R$ \quad (Lying Over);
\item if $P,P' \in \Spec S$ are such that $P \subset P'$
then $P \cap R \subset P' \cap R$ \quad (Incomparability).
\end{itemize}

\begin{lem} \label{L:centralizing}
Let $R\subseteq S$ be a finite centralizing extension of 
rings and let $G$ be a group acting
by automorphisms on $S$ that stabilize $R$. Assume that every ideal $A$ of $S$ contains a finite
product of primes each of which contains $A$.
Then contraction
yields a surjective map
\begin{equation*} 
\GSpec S \onto \GSpec R \,,  \qquad I \mapsto I \cap R
\end{equation*}
with finite fibers. In particular, if one of $\GSpec S$ or $\GSpec R$ is finite then so is the other.
\end{lem}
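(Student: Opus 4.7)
The plan is to prove three things: (a) the map $I \mapsto I \cap R$ sends $\GSpec S$ to $\GSpec R$, (b) it is surjective, and (c) every fiber is finite; the ``in particular'' clause then follows at once.

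For (a), note first that $\mathbf{C}_S(R)$ is $G$-stable, since $(g.s)r = g.(s \cdot g^{-1}.r) = g.(g^{-1}.r \cdot s) = r(g.s)$ for $s \in \mathbf{C}_S(R)$, $r \in R$. Using $S = R\,\mathbf{C}_S(R)$, for any ideals $A, B$ of $R$ the extensions $SAS = A\,\mathbf{C}_S(R)$ and $SBS = \mathbf{C}_S(R)\,B$ are $G$-stable whenever $A, B$ are. If $I \in \GSpec S$, then $I \cap R$ is $G$-stable, and from $AB \subseteq I \cap R$ with $A, B$ $G$-stable we deduce $SAS \cdot SBS = AB\,\mathbf{C}_S(R) \subseteq I$; $G$-primality of $I$ gives $SAS \subseteq I$ or $SBS \subseteq I$, whence $A \subseteq I \cap R$ or $B \subseteq I \cap R$.

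For (b), given $J \in \GSpec R$, apply the hypothesis to the $G$-stable ideal $\mathfrak J = SJS$: there are primes $P_1, \ldots, P_m$ of $S$, each containing $\mathfrak J$, with $P_1 \cdots P_m \subseteq \mathfrak J$. The minimal primes of $S$ over $\mathfrak J$ form a finite $G$-stable set; partition them into $G$-orbits $O_1, \ldots, O_s$ and set $I_j = \bigcap_{P \in O_j} P$. Each $I_j$ is $G$-stable and $G$-prime (a $G$-stable ideal lying inside one $P \in O_j$ lies inside every $G$-translate of $P$, hence in $I_j$), and $I_j \supseteq \mathfrak J \supseteq J$. From $\bigl(\bigcap_i P_i\bigr)^m \subseteq P_1 \cdots P_m \subseteq \mathfrak J$ one gets $\bigl(\bigcap_j I_j\bigr)^m \subseteq \mathfrak J$; contracting to $R$ and using the identity $\mathfrak J \cap R = J$ (obtained by lifting each minimal prime of $R$ over $J$ to a prime of $S$ containing $\mathfrak J$ via Bergman's lying-over and intersecting), a product of the $G$-stable ideals $I_j \cap R$ lands inside $J$. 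The $G$-primality of $J$ then yields some $j$ with $I_j \cap R \subseteq J$; combined with $I_j \cap R \supseteq J$ this gives $I_j \cap R = J$.

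For (c), any $I \in \GSpec S$ with $I \cap R = J$ contains $\mathfrak J$, and hence $(I_1 \cdots I_s)^m \subseteq \bigl(\bigcap_j I_j\bigr)^m \subseteq \mathfrak J \subseteq I$; the $G$-primality of $I$ then produces some $j$ with $I_j \subseteq I$. Bergman's incomparability, combined with $I \cap R = I_j \cap R = J$, forces $I = I_j$, bounding the fiber by $s$. The main obstacle is the identity $\mathfrak J \cap R = J$ used in step (b): it is clean when $J$ is semiprime (lift each minimal prime of $R$ over $J$ via Bergman's lying-over to a prime of $S$ containing $\mathfrak J$, and intersect), but in the general case one also needs that $G$-primes of $R$ are semiprime under the standing hypothesis, which is a more delicate consequence of the finite product-of-primes condition passed through the centralizing extension.
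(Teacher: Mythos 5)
Your overall architecture (well-definedness via $AS=SA$, surjectivity and fiber bounds via the orbit-intersections $I_j$ of the minimal primes over $JS$) is close in spirit to the paper, but the proof as written has a genuine gap, and you have put your finger on it yourself: the identity $JS\cap R=J$, equivalently the semiprimeness of $G$-primes of $R$, is asserted but never proved. The lemma's hypothesis concerns only ideals of $S$, and nothing in your argument transfers it to $R$. Without that, contracting $\bigl(\bigcap_j I_j\bigr)^m\subseteq JS$ only gives a product of the $G$-stable ideals $I_j\cap R$ inside $JS\cap R$, which a priori may strictly contain a non-semiprime $G$-prime $J$, so the appeal to $G$-primality of $J$ does not go through and surjectivity is unproved. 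The paper closes exactly this hole with an external input: by \cite[Corollary 1.4]{mL81}, in a finite centralizing extension every ideal $B$ of $R$ contains a finite power of $BS\cap R$; combined with the hypothesis on $S$ this shows that every ideal of $R$ also contains a finite product of primes lying over it, whence every $G$-prime of $R$ has the form $Q\byG$ with $Q\in\Spec R$ and in particular is semiprime (and then Lying Over applied to $Q$ gives surjectivity directly). This is a substantive citation, not a routine consequence of "passing the product-of-primes condition through the extension," and your proof is incomplete without it or an equivalent argument.

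There is a second, unacknowledged soft spot in your fiber-finiteness step: you apply ``Bergman's incomparability'' to the inclusion $I_j\subseteq I$ of $G$-primes with equal contractions. Incomparability is a statement about prime ideals of $S$; an inclusion of semiprime ideals with equal contraction need not be an equality without further structure, so the deduction $I=I_j$ is not licensed as stated. It can be repaired, but only after first establishing the structural fact that opens the paper's proof and which you never state: by the hypothesis on $S$, every $G$-prime of $S$ is the intersection of a single finite $G$-orbit of primes (its minimal primes). Granting that, one writes $I$ as such an orbit intersection, matches the minimal primes over $J$ among the contractions of the two orbits, and applies incomparability to genuine primes $P''\subseteq P'$ with $P''\cap R=P'\cap R$ to force $P''=P'$, hence $I=I_j$. (The paper instead shows directly that any $I=P\byG$ in the fiber has $P$ minimal over $JS$, using incomparability together with the finiteness of the orbit of $P\cap R$; either route works, but both require the $P\byG$ description of $G$-primes that your write-up omits.) So: right strategy, but the surjectivity step needs the transfer result from \cite{mL81}, and the fiber count needs the orbit-intersection structure of $G$-primes of $S$ before incomparability can be invoked.
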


\begin{proof}
First, we note that the $G$-primes of $S$ are exactly the ideals 
of the form $P \byG$ with $P \in \Spec S$. Indeed, it is straightforward to check that
$P \byG$ is $G$-prime. Conversely, for any given $I \in \GSpec S$, there are finitely
many $P_i \in \Spec S$ (not necessarily distinct) with $I \subseteq P_i$ 
and $\prod_i P_i \subseteq I$.
But then $I \subseteq P_i\byG$ for each $i$ 
and $\prod_i P_i\byG \subseteq I$, whence $I = P_i\byG$ for some $i$.
In particular,
each $I \in \GSpec S$ is semiprime. The group $G$ permutes the finitely many
primes of $S$ that are minimal over $I$ and $G$-primeness forces these primes to form a single
$G$-orbit. Therefore, we may write $I = P\byG$ with $P \in \Spec S$ having a finite $G$-orbit.
Similar remarks apply to the ring $R$, because every ideal $B$ of $R$ also contains a finite
product of primes each of which contains $B$; this follows from the fact that $B$ contains
some finite power of $BS\cap R$ by \cite[Corollary 1.4]{mL81}.

Now let $I \in \GSpec S$ be given and let $A,B$ be $G$-stable ideals of $R$ such that 
$AB \subseteq I \cap R$. Then $AS = SA$ is a $G$-stable ideal of $S$ and similarly for $B$.
Since $(AS)(BS) = ABS \subseteq I$, we must have $AS \subseteq I$ or $BS \subseteq I$ and
hence $A \subseteq I \cap R$ or $B \subseteq I \cap R$. Thus contraction yields a 
well-defined map $\GSpec S \to \GSpec R$\,.

For surjectivity of the contraction map, let $J \in \GSpec R$ be given and write $J = Q\byG$ with
$Q \in \Spec R$. By Lying Over we may choose $P \in \Spec S$ with $Q = P\cap R$. Putting
$I = P\byG$ we obtain a $G$-prime of $S$ such that $J = I \cap R$\,.

Finally, assume $I \in \GSpec S$ contracts to a given $J \in \GSpec R$. 
Write $I = P \byG$ with $P \in \Spec S$ having a finite $G$-orbit. 
We claim that $P$ must be minimal over the ideal $JS$. Indeed, suppose that $JS \subseteq P'
\subset P$ for some $P' \in \Spec S$. Then Incomparability gives
$P \cap R \supset P' \cap R \supseteq J = \bigcap_{g\in G} g.(P \cap R)$.
Since this intersection is finite and $P'\cap R$ is prime, we conclude that
$g.(P \cap R)  \subseteq P' \cap R$ for some $g\in G$. Hence, $g.(P \cap R) \subset P \cap R$
which is impossible. This proves minimality of $P$ over $JS$. 
It follows that there are finitely many possibilities for $P$,
and hence there are finitely many possibilities for $I$. This completes the proof of the lemma.
\end{proof}


The hypothesis that every ideal of $S$ contains a finite
product of prime divisors is of course satisfied, by Noether's classical argument, 
if $S$ satisfies the ascending chain condition for ideals. More importantly for our purposes, 
the hypothesis also holds
for any affine PI-algebra $S$ over some commutative noetherian ring by Braun's Theorem \cite[6.3.39]{lR88}.


\subsection{Rational group actions} \label{SS:ratact}

Let $G$ be an affine algebraic $\k$-group, where $\k$ is an algebraically closed field, and let $\k[G]$ denote the
Hopf algebra of regular functions on $G$. A $\k$-vector space $M$ is called a 
\emph{$G$-module} if 
$M$ is a $\k[G]$-comodule; see Jantzen
\cite[2.7-2.8]{jcJ03} or Waterhouse \cite[3.1-3.2]{wW79}. Writing the comodule structure map
$\Delta_M \colon M \to M \otimes \k[G]$ as 
$\Delta_M(m) = \sum m_0 \otimes m_1$, the group
$G$ acts by $\k$-linear transformations on $M$ via
\begin{equation*} \label{E:action}
g.m  = \sum m_0 m_1(g) \qquad (g \in G, m \in M)\ .
\end{equation*}
Such $G$-actions, called \emph{rational $G$-actions}, are in particular locally finite: the $G$-orbit
of any $m\in M$
is contained in the finite-dimensional $\k$-subspace of $M$ that is generated by $\{ m_0\}$. 
If $G$ acts rationally on $M$ then it does so on all $G$-subquotients of $M$. 
Moreover, every irreducible $G$-submodule of $M$ is finite-dimensional, and the sum
of all irreducible $G$-submodules is an essential $G$-submodule of $M$, called the \emph{socle}
of $M$ and denoted by $\soc_G M$. In the following, we will denote the set
of isomorphism classes of irreducible $G$-modules by $\irr G$ and,
for each $E \in \irr G$, we let 
\[
[R:E] \in \ZZ_{\ge 0} \cup \{\infty\} 
\]
denote the \emph{multiplicity} of $E$ in $R$; see \cite[I.2.14]{jcJ03}.

We will be primarily concerned with the situation where $G$ acts rationally by algebra automorphisms 
on a $\k$-algebra $R$. This is equivalent to $R$ being a right $\k[G]$-comodule algebra in the
sense of \cite[4.1.2]{sM93}.
In the special case where $G \cong (\k^\times)^d$ is an
algebraic torus, we have $\irr G = X(G) \cong \ZZ^d$, the lattice of rational
characters $\lambda \colon G \to \k^\times$. We will usually write $\Lambda = X(G)$.
A rational $G$-action on $R$
is equivalent to a $\ZZ^d$-grading $R = \bigoplus_{\lambda \in \ZZ^d} R_{\lambda}$ of the algebra $R$.
This follows from the fact that $\k[G]$ is the group algebra $\k \Lambda$
of the lattice $\Lambda  \cong \ZZ^d$, and $\k\Lambda$-comodule algebras
are the same as $\Lambda$-graded algebras; see \cite[4.1.7]{sM93}. The homogeneous
component of $R$ of degree $\lambda$ is the weight
space
\[
R_\lambda = \{ r \in R \mid g.r = \lambda(g)r \text{ for all $g \in G$} \}\ ,
\]
and $[R:\lambda] = \dim_\k R_\lambda$.

The following lemma was referred to in the Introduction.

\begin{lem} \label{L:ratact}
Let $R$ be a $\k$-algebra, where $\k$ is an algebraically closed field, and let
$G$ be an affine algebraic $\k$-group that acts rationally by $\k$-algebra automorphisms on $R$.
If $G^0 \subseteq G$ 
denotes the connected component 
of the identity, then $G^0\text{-}\!\Spec R$ consists of the ordinary prime ideals of $R$ 
that are $G^0$-stable.
Moreover, $\GSpec R$ is finite if and only if $G^0\text{-}\!\Spec$ is 
finite. 
\end{lem}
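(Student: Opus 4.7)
The two assertions rely on different ingredients, so I will treat them separately.

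\emph{First assertion.} One direction is immediate: any $G^0$-stable prime is $G^0$-prime. For the converse, that every $G^0$-prime $I$ is already prime, the plan is to exploit connectedness of $G^0$: since $G^0$ is a connected (hence irreducible) affine variety, the coordinate ring $\k[G^0]$ is an integral domain. The key tool will be the comodule structure map $\Delta \colon R \to R \otimes_{\k} \k[G^0]$, which is an injective $\k$-algebra homomorphism by the counit axiom recalled in~\ref{SS:ratact}. Using the standard Hopf-theoretic ``untwisting'' identification $R \otimes_{\k} \k[G^0] = \Delta(R) \cdot (1 \otimes \k[G^0])$, one can translate $G^0$-primeness of $I$ into primeness of the scalar extension $(R/I) \otimes_{\k} \k[G^0]$. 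Since $\k[G^0]$ is a domain, primeness of the tensor product will descend to primeness of $R/I$, as required.

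\emph{Second assertion.} Here I would use that $F := G/G^0$ is a finite group (a standard property of affine algebraic groups). The plan is to introduce the map
\[
\gamma \colon G^0\text{-}\!\Spec R \tto \GSpec R, \qquad J \mapsto J\byG = \bigcap_{g \in G} g.J,
\]
and show it is well-defined with fibres of size at most $|F|$. Since $J$ is $G^0$-stable, the $G$-orbit of $J$ has at most $|F|$ elements; moreover $J$ is prime by the first assertion, so $J\byG$ is $G$-stable and $G$-prime by precisely the verification given at the start of the proof of Lemma~\ref{L:centralizing}. For surjectivity, given $I \in \GSpec R$, pick any prime $P$ of $R$ minimal over $I$; the set of such minimal primes is permuted by $G$, and finiteness of this set together with connectedness of $G^0$ forces each $P$ to be $G^0$-stable, so $P \in G^0\text{-}\!\Spec R$. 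The $G$-primeness of $I$ then gives $I = P\byG$, because otherwise the distinct $G$-orbits of minimal primes would yield a factorization of $I$ by proper $G$-stable ideals. The fibre of $\gamma$ over $I$ is the $F$-orbit of $P$, whose size is at most $|F|$, so $\GSpec R$ and $G^0\text{-}\!\Spec R$ are simultaneously finite.

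The main obstacle I anticipate is the first assertion: the Hopf-comodule calculation requires genuine care, both in translating $G^0$-primeness of $I$ into a primeness statement for $(R/I) \otimes_{\k} \k[G^0]$ and in descending that statement back to $R/I$ itself via the fact that $\k[G^0]$ is a domain. Once the first assertion is in hand, the second follows by the routine bookkeeping sketched above, using only the finiteness of the quotient $F = G/G^0$.
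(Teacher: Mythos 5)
Your proposal has genuine gaps in both halves, and the crucial point is that Lemma~\ref{L:ratact} is stated for an \emph{arbitrary} $\k$-algebra $R$, so none of the finiteness machinery available elsewhere in the paper (noetherianity, Braun's theorem for affine PI-algebras) may be used. For the first assertion, the paper does not reprove it at all: it cites \cite[Proposition 19(a)]{mL08}. Your sketch does not amount to a proof: the step ``one can translate $G^0$-primeness of $I$ into primeness of $(R/I)\otimes_\k\k[G^0]$'' is precisely the entire content of the assertion and is left unargued, while the step you do explain is the easy one and is argued with the wrong tool --- descending primeness from $(R/I)\otimes_\k\k[G^0]$ to $R/I$ is formal (if $X,Y$ are ideals of $R/I$ with $XY=0$, then $(X\otimes\k[G^0])(Y\otimes\k[G^0])=0$), requiring only that $\k[G^0]\neq 0$, not that it is a domain. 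Irreducibility of $G^0$, i.e.\ the domain property of $\k[G^0]$, must enter in the unproved forward implication, and making that work for an arbitrary algebra is exactly the nontrivial content of \cite{mL08}.

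For the second assertion, the well-definedness of $J\mapsto J\byG$ and the fibre bound are fine and match the paper's argument (fibres are $\G$-orbits, $\G=G/G^0$, since a finite intersection of the primes $x.Q$ determines them up to $\G$-conjugacy). The genuine gap is surjectivity. You choose a prime $P$ minimal over $I\in\GSpec R$, invoke ``finiteness of this set'' of minimal primes to get $G^0$-stability of $P$, and conclude $I=P\byG$ because otherwise the orbits of minimal primes ``would yield a factorization of $I$ by proper $G$-stable ideals.'' For a general $\k$-algebra an ideal may have infinitely many minimal primes, and even when there are finitely many there is no reason that any product of them (or of the corresponding $G$-stable intersections) lies inside $I$; that is a noetherian/affine-PI phenomenon (Braun's theorem), used in \ref{SSS:Gprime} and Lemma~\ref{L:centralizing} but not available under the hypotheses of this lemma. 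Without it your argument establishes neither that the $G$-orbit of $P$ is finite (which is what makes the closed stabilizer have finite index and hence contain $G^0$) nor that $I=P\byG$. The paper sidesteps all of this by quoting \cite[Proposition 8]{mL08}, valid for rational actions on arbitrary algebras, to write $I=P'\byG$ with $P'\in\Spec R$, and then takes $Q=P'$ intersected over $G^0$, checking $I=\bigcap_{x\in\G}x.Q$. Under added noetherian or affine PI hypotheses your surjectivity argument would go through, but as stated the lemma assumes neither.
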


\begin{proof}
For the assertion that all $G^0$-primes are prime, see \cite[Proposition 19(a)]{mL08}.

The second assertion, that $\GSpec R$ is finite if and only if $G^0\text{-}\!\Spec$ is so,
actually holds for any (normal) subgroup $N \trianglelefteq G$ having finite index in $G$ in
place of $G^0$. Putting $\G = G/N$, we first note that the $G$-primes of $R$ are exactly the ideals of the 
form $P = \bigcap_{x\in\G} x.Q$ with $Q \in N\text{-}\!\Spec R$. Indeed, $\bigcap_{x\in\G} x.Q$ is easily seen to
be $G$-prime. Conversely, 
any $P \in \GSpec R$ has the form $P = P'\byG$ with $P' \in \Spec R$ by \cite[Proposition 8]{mL08},
and hence we may take $Q = P'\byG^0$. Moreover, the intersection $\bigcap_{x\in\G} x.Q$ determines
the $N$-prime $Q$ to within $\G$-conjugacy, because all $x.Q$ are $N$-prime ideals of $R$ and $\G$ is finite.
Therefore, finiteness of $N\text{-}\!\Spec R$ is equivalent to finiteness of $\GSpec R$. 
\end{proof}


\subsection{Some ring theoretic background on affine PI-algebras} \label{SS:background}

Let $R$ be an affine PI-algebra over a commutative noetherian ring $\k$. The following
facts are well-known. 

\subsubsection{Semiprime ideals} \label{SSS:semiprime}
The ring $R$ satisfies the ascending chain condition for 
semiprime ideals and, for each ideal
$I$ of $R$, there are only finitely many primes of $R$ that are minimal over $I$. 
If $I$ is semiprime then $R/I$ is a right and left
Goldie ring and the extended centroid of $R/I$, in the sense of Martindale \cite{wM69}, is given by 
$\cC(R/I) = \cen(Q(R/I))$,
the center of the classical ring of quotients of $R/I$. If $I$ is prime then $\cC(R/I)$ is
identical to the field of fractions of  $\cen(R/I)$ by Posner's Theorem.
See \cite[6.1.30, 6.3.36']{lR88}, \cite[13.6.9]{jMcCjR87}, \cite[1.4.2]{mL08} for all this.

\subsubsection{$G$-prime ideals} \label{SSS:Gprime}
By Braun's Theorem \cite[6.3.39]{lR88}, every ideal $I$ 
of $R$ contains a finite product of primes that contain $I$. 
As in the proof of Lemma~\ref{L:centralizing} it follows that, for any group 
$G$ acting by ring automorphisms on $R$, the 
$G$-primes of $R$ are exactly the ideals of the form $P\byG$ with $P\in \Spec R$, and
$P$ can be chosen to have a finite $G$-orbit.
In particular, every $I \in \GSpec R$ is semiprime. The ring of $G$-invariants
$\cC(R/I)^G$ is a field for every $I \in \GSpec R$; see \cite[Prop. 9]{mL08}. 

\subsubsection{Nullstellensatz}  \label{SSS:Nst}
If $\k$ is a Jacobson ring then so is $R$: every prime ideal of $R$ 
is an intersection of primitive ideals. Moreover, if $P$ is a primitive ideal of $R$ then $P$
is maximal; in fact, $\k/P\cap\k$ is a field and $R/P$ is a finite-dimensional algebra over this field. See
\cite[6.3.3]{lR88}.

\subsubsection{Rational ideals and the Dixmier-M{\oe}glin equivalence}  \label{SSS:DM}
Now assume that $\k$ is an algebraically closed field.
Recall that a prime ideal $P$ of $R$ is said to be \emph{rational} if 
$\cC(R/P) = \k$ or, equivalently, $\cen(R/P) = \k$. By Posner's Theorem \cite[6.1.30]{lR88}, this forces $P$ to 
have finite $\k$-codimension in $R$.
In fact, for any prime ideal of $R$, the following properties coincide (\emph{Dixmier-M{\oe}glin equivalence}),
the implications $\Rightarrow$ being either trivial or immediate from the Nullstellensatz:
\[
\text{finite codimensional} \equiv \text{maximal} \equiv \text{locally closed in $\Spec R$} 
\equiv \text{primitive} \equiv \text{rational}\ .
\]


\subsection{The trace ring of a prime PI-ring} \label{SS:TR}

Let $R$ be a prime PI-ring with center $C =  \cen(R)$. By Posner's Theorem  \cite[6.1.30]{lR88}, 
the central localization
$Q(R) = R_{C \setminus \{ 0 \}}$ is a central simple algebra over the field of fractions $F = Q(C) = \cC(R)$. 
For each $q \in Q(R)$ we can consider the 
\emph{reduced characteristic polynomial}
$c_q(X) \in F[X]$. In detail, letting $F^{\text{alg}}$ denote an algebraic closure of $F$, we have
an isomorphism of $F^{\text{alg}}$-algebras 
\begin{equation} \label{E:TR}
\phi \colon Q(R)\otimes_F F^{\text{alg}} \cong \Mat_n(F^{\text{alg}})
\end{equation}
for some $n$. 
This isomorphism allows us to define $c_q(X)$ as the characteristic
polynomial of the matrix $\phi(q\otimes 1) \in \Mat_n(F^{\text{alg}})$. 
One can show that $c_q(X)$ has coefficients in $F$ and is independent of 
the choice of the isomorphism $\phi$; 
see \cite[\S 9a]{iR75}  or \cite[\S12.3]{nB73}.

The \emph{commutative trace ring} of $R$, by definition, is the $C$-subalgebra of $F$ that is generated by
the coefficients of all polynomials $c_r(X)$ with $r \in R$; this algebra will be denoted by $T$.
The \emph{trace ring} of $R$, denoted by $TR$, is the $C$-subalgebra of $Q(R)$ that is 
generated by $R$ and $T$\,. 
The following result is standard; see \cite[13.9.11]{jMcCjR87} or \cite[3.2]{nVE89}.

\begin{lem} \label{L:TR}
Let $R$ be a prime PI-ring that is an affine algebra over
some commutative noetherian ring $\k$. Then $T$ is an affine commutative $\k$-algebra
and $TR$ is a finitely generated $T$-module. Furthermore, $TR$ is finitely generated as $R$-module
if and only if $R$ is noetherian.
\end{lem}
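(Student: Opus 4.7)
The plan is to prove the three assertions in turn, with Shirshov's height theorem and the Razmyslov/Procesi theory of trace identities for $n\times n$ matrices as the principal external inputs.

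\textbf{Affiness of $T$ and finite generation of $TR$ over $T$.} Fix a finite $\k$-algebra generating set $r_1,\dots,r_m$ of $R$. Newton's identities express the coefficients of $c_r(X)$ as polynomials in $\tr(r),\dots,\tr(r^n)$, so $T$ coincides with the $\k$-subalgebra of $F$ generated by $\tr(M)$ for all monomials $M$ in the $r_i$; Razmyslov's theorem on trace identities for $n\times n$ matrices shows that only finitely many such monomials $M$ (of length bounded by a constant depending on $n$ and $m$) are needed to generate $T$, so $T$ is affine. For $TR$ over $T$, Cayley--Hamilton applied in \eqref{E:TR} makes each $r\in R$ integral of degree $n$ over $T$, and Shirshov's height theorem writes every element of $R$ as a $C$-combination of bounded Shirshov monomials $w_{i_1}^{a_1}\cdots w_{i_h}^{a_h}$ with $h$ bounded and $w_{i_j}$ from a fixed finite list; reducing all $a_j$ to be $<n$ via integrality yields a finite $T$-module generating set for $R$, hence for $TR=RT$.

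\textbf{The equivalence, easy direction.} If $TR$ is f.g.\ as a left $R$-module, then $T$ is noetherian (affine over noetherian $\k$, by Hilbert) and $TR$ is f.g.\ over $T$ by the previous step, so $TR$ is noetherian as a $T$-module. Since $T\subseteq\cen(TR)$, every left $R$-submodule of $TR$ is also a $T$-submodule; hence $TR$ is noetherian as a left $R$-module, and its submodule $R$ is noetherian.

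\textbf{The main obstacle: the hard direction.} Assume $R$ is noetherian. My plan is to produce $c\in C\setminus\{0\}$ with $c\cdot TR\subseteq R$; given such $c$, the left $R$-module $TR$ embeds in $c^{-1}R\cong R$, which is noetherian, so $TR$ is finitely generated over $R$. To obtain $c$, I invoke the classical theorem that an affine prime noetherian PI-algebra is a finitely generated module over its center $C$ (and that $C$ is noetherian, by Artin--Tate). Integrality of $R$ over $C$ then makes each eigenvalue of $\phi(r\otimes 1)$ integral over $C$, so the coefficients of $c_r(X)$, and therefore all of $T$, lie in the integral closure of $C$ in $F$. Combined with affiness of $T$ over $C$ from the first step, $T$ is a finitely generated $C$-module; clearing denominators for $C$-module generators $t_1,\dots,t_p\in F=Q(C)$ of $T$ yields $c\in C\setminus\{0\}$ with $cT\subseteq C$, and then $c\cdot TR=cRT=RcT\subseteq R$, as required.
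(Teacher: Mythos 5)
The paper does not prove this lemma at all (it is quoted from \cite[13.9.11]{jMcCjR87} and \cite[3.2]{nVE89}), so what matters is whether your argument stands on its own, and there are genuine gaps. The most serious one is in your hard direction: the statement you invoke as a ``classical theorem'' --- that an affine prime \emph{noetherian} PI-algebra is a finitely generated module over its center --- is not a theorem you can quote. It appears in none of the standard references, it is much stronger than the lemma you are proving (if it were available, the trace ring would be largely pointless for noetherian $R$; the whole reason $TR$ is introduced is that finiteness of $R$ over $\cen(R)$ is exactly what one cannot assume), and the standard sources prove the present lemma without anything of the sort. Your overall strategy for this direction --- produce $0\neq c\in C$ with $c\,TR\subseteq R$ and then embed $TR\cong c\,TR\subseteq R$ as a left $R$-module --- is precisely the standard one, but the legitimate source of $c$ is the Formanek center: $0\neq g_n(R)^+\subseteq C$ and $g_n(R)R$ is a common ideal of $R$ and $TR$, i.e.\ $g_n(R)\,TR\subseteq R$; see \cite[6.1.37 and 6.3.28]{lR88}, which is characteristic-free and is exactly the fact the paper itself uses later in the proof of Theorem~\ref{T:torus}. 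Your easy direction also contains a false step: from $T\subseteq\cen(TR)$ it does \emph{not} follow that every left $R$-submodule of $TR$ is a $T$-submodule ($R$ itself is such a submodule, and $TR\neq R$ in general). What this direction really needs is the noncommutative Eakin--Nagata theorem (Formanek--Jategaonkar): $TR$ is a noetherian ring, being a finite module over an affine commutative algebra, and a ring that is left noetherian and finitely generated as a left module over a subring forces that subring to be left noetherian.

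Your first step is also only a characteristic-zero argument. Newton's identities recover the coefficients of $c_r(X)$ from $\tr(r),\dots,\tr(r^n)$ only when $n!$ is invertible in $\k$, and the Razmyslov--Procesi ``generation by traces of bounded monomials'' theorem is a characteristic-zero result; but the lemma is stated over an arbitrary commutative noetherian $\k$, and the paper applies it over an algebraically closed field of arbitrary characteristic. In characteristic $p\le n$ the $\k$-algebra generated by traces can be strictly smaller than $T$ (which is the $C$-algebra generated by \emph{all} characteristic coefficients --- note also that $C\subseteq T$ by definition, and your identification $c=\tr(c)/n$ silently divides by $n$), so the identification of $T$ with a trace-generated algebra fails there; the characteristic-free proofs of affineness of $T$ work with characteristic coefficients throughout (Schelter-style arguments, Amitsur's formula, or Donkin's theorem in place of Razmyslov), and this is where the real content of \cite[13.9.11]{jMcCjR87} lies. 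By contrast, your Shirshov--Cayley--Hamilton argument that $TR$ is a finite $T$-module is correct and characteristic-free (Shirshov even gives a $\k$-spanning set, which is all you need), as is the formal part of your denominator-clearing computation once a legitimate $c$ is in hand.
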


Now suppose that a group $G$
acts by ring automorphisms on $R$. The action of $G$ extends uniquely to an action on the
trace ring $TR$, and this action stabilizes $T$. 
To see this, note that the $G$-action on $R$ extends uniquely to an action on the
ring of fractions $Q(R)$.
Each $g \in G$ stabilizes $F = \cen(Q(R))$, and
hence $g$ yields an automorphism of $F[X]$ via its action on
the coefficients of polynomials. The reduced characteristic polynomials of $q \in Q(R)$
and of $g.q$ are related by
\begin{equation} \label{E:groupactions}
c_{g.q}(X) = g.c_q(X)\ .
\end{equation}
Indeed, extending $g$ to a field automorphism of $F^{\text{alg}}$, we obtain automorphisms
$\Mat_n(g) \in \Aut \Mat_n(F^{\text{alg}})$ and $\alpha_g \in \Aut Q(R)\otimes_F F^{\text{alg}}$,
the latter being defined by $\alpha_g(q \otimes f) = g.q \otimes g.f$. Fixing $\phi$ as in 
\eqref{E:TR} we obtain an isomorphism of $F^{\text{alg}}$-algebras 
$\Mat_n(g)^{-1} \circ \phi \circ \alpha_g \colon Q(R)\otimes_F F^{\text{alg}} 
\cong \Mat_n(F^{\text{alg}})$.
Using this isomorphism to compute reduced characteristic polynomials, we see that
$c_q(X) = g^{-1}.c_{g.q}(X)$, proving \eqref{E:groupactions}. Since $g.r \in R$ for $r \in R$,
equation \eqref{E:groupactions} shows that the commutative trace ring $T$ is stable under the
action of $G$ on $Q(R)$, and hence so is the trace ring $TR$. For rational actions, we have 
the following result of Vonessen \cite[Proposition 3.4]{nVE89}.

\begin{lem}[Vonessen] \label{L:TR2}
Let $R$ a prime PI-algebra over an algebraically closed
field $\k$ and let $G$ be an affine algebraic $\k$-group that acts 
rationally by $\k$-algebra automorphisms on $R$. Then
the induced $G$-actions on $TR$ and on $T$ 
are rational as well. 
\end{lem}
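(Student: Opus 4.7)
The plan is to show that both $T$ and $TR$ are unions of finite-dimensional rational $G$-submodules, which characterises rationality among locally finite $G$-actions. Since $TR = R\cdot T$ and rational $G$-submodules of a $G$-algebra are closed under sums and products, rationality of $TR$ will follow at once from rationality of $T$: any $\sum t_j r_j \in TR$ lies in $M\cdot N$ for finite-dimensional rational $G$-submodules $M\subseteq T$ and $N\subseteq R$. So the bulk of the work is to establish rationality of the $G$-action on $T$.

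Recall that $T$ is generated as a $\k$-algebra by the coefficients $c_i(r)\in F$ of the reduced characteristic polynomials $c_r(X) = \sum_{i=0}^n c_i(r) X^i$ as $r$ varies over $R$. Given such an $r$, local finiteness of the rational $G$-action on $R$ yields a finite-dimensional $G$-stable rational $\k$-subspace $V\subseteq R$ with $r\in V$. For each $i$, let $W_{V,i}\subseteq T$ denote the $\k$-span of $\{c_i(v):v\in V\}$. I intend to show that each $W_{V,i}$ is a finite-dimensional rational $G$-submodule of $T$; since $T$ is generated as a $\k$-algebra by $\bigcup_{V,i} W_{V,i}$, its rationality then follows from the closure properties just noted.

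The core of the argument is the verification of this claim. Via the isomorphism \eqref{E:TR}, $c_i$ is a coefficient of a characteristic polynomial, hence a polynomial function $Q(R)\to F$, homogeneous of degree $n-i$ in the matrix entries. Fixing a $\k$-basis $e_1,\ldots,e_m$ of $V$, one has a unique expansion
\[
c_i\!\left(\sum_j a_j e_j\right) = \sum_{|\alpha| = n-i} a^\alpha w_\alpha, \qquad w_\alpha\in F,
\]
and since $\k$ is algebraically closed (hence infinite), $W_{V,i}$ coincides with the $\k$-span of the finitely many $w_\alpha$, so is finite-dimensional. $G$-stability is immediate from \eqref{E:groupactions}: $g.c_i(v) = c_i(g.v)\in W_{V,i}$. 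Finally, writing the rational coaction on $V$ as $g.e_\ell = \sum_k \mu_{k\ell}(g)e_k$ with $\mu_{k\ell}\in \k[G]$, substitution into the expansion yields
\[
g.c_i(v) = c_i(g.v) = \sum_\alpha \prod_\ell \left(\sum_j a_j \mu_{j\ell}(g)\right)^{\alpha_\ell} w_\alpha,
\]
a regular function of $g\in G$ valued in $W_{V,i}$. This exhibits the required coaction $W_{V,i}\to W_{V,i}\otimes \k[G]$.

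The main obstacle is precisely this verification, bridging the gap from the polynomial nature of $c_i$ on $Q(R)$ to a rational $G$-comodule structure on the finite-dimensional subspace $W_{V,i}\subseteq F$; everything else is bookkeeping with the standard closure properties of rational $G$-modules.
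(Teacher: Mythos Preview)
The paper does not give its own proof of this lemma; it is quoted from Vonessen without argument. Your proof is correct and follows the natural route: since $TR = R\cdot T$ with $T$ central, rationality of $TR$ reduces to that of $T$; and since each coefficient $c_i$ of the reduced characteristic polynomial is a homogeneous polynomial map of degree $n-i$, its restriction to any finite-dimensional rational $G$-submodule $V\subseteq R$ has image spanning a finite-dimensional $G$-stable subspace $W_{V,i}\subseteq T$ on which, by \eqref{E:groupactions}, the orbit maps $g\mapsto g.c_i(v)=c_i(g.v)$ are regular functions of $g$. For a finite-dimensional $G$-stable space this is exactly what is needed to exhibit a $\k[G]$-comodule structure, and rationality of the algebra $T$ generated by the $W_{V,i}$ then follows from the standard closure properties you invoke.
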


In general, the finiteness problem \ref{SS:question} transfers nicely to trace rings.

\begin{prop} \label{P:TR}
Let $R$ be a prime PI-ring that is  an affine algebra over
some commutative noetherian ring. Let $G$ be a group acting by ring automorphism on $R$  
and consider the induced $G$-actions on $T$ and on $TR$. Then
$\GSpec T$ is finite if and only if $\GSpec TR$ is finite. If $R$ is noetherian, then this is
also equivalent to $\GSpec R$ being finite.
\end{prop}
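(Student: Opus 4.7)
The plan is to apply Lemma~\ref{L:centralizing} twice, once to the extension $T \subseteq TR$ and, in the noetherian case, once to the extension $R \subseteq TR$. Both applications require checking that the two extensions are finite and centralizing, that the acting group stabilizes the subring, and that ideals of $TR$ contain finite products of prime divisors.

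First I would verify the setup for $T \subseteq TR$. The extension is centralizing because $T$ is contained in the center of $TR$, and it is finite by Lemma~\ref{L:TR}. The group $G$ stabilizes $T$ by the discussion culminating in \eqref{E:groupactions} above the proposition. The crucial Braun-type hypothesis of Lemma~\ref{L:centralizing} is satisfied by $TR$: by Lemma~\ref{L:TR}, $T$ is an affine commutative $\k$-algebra, hence noetherian, and $TR$ is a finitely generated $T$-module, which makes $TR$ into an affine PI-algebra over $\k$; Braun's Theorem \cite[6.3.39]{lR88} then supplies the required finite product of prime divisors for every ideal of $TR$. Applying Lemma~\ref{L:centralizing} with $R \rightsquigarrow T$, $S \rightsquigarrow TR$ gives a surjection $\GSpec TR \onto \GSpec T$ with finite fibers, yielding the first equivalence.

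For the second equivalence, assume $R$ is noetherian. Then Lemma~\ref{L:TR} tells us that $TR$ is finitely generated as an $R$-module, so $R \subseteq TR$ is finite. It is centralizing because $TR = RT$ with $T$ central in $TR$, so $T \subseteq \mathbf{C}_{TR}(R)$ and hence $TR = R\,\mathbf{C}_{TR}(R)$. The group $G$ stabilizes $R$ by hypothesis, and the finite-product-of-primes condition on ideals of $TR$ was already verified. A second application of Lemma~\ref{L:centralizing}, now with $R \rightsquigarrow R$ and $S \rightsquigarrow TR$, produces a finite-to-one surjection $\GSpec TR \onto \GSpec R$, proving that $\GSpec R$ is finite iff $\GSpec TR$ is finite.

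There is no serious obstacle here; the proof is essentially a bookkeeping exercise translating the problem along the chain $T \subseteq TR \supseteq R$ using the already established machinery. The only point that deserves attention is confirming that the Braun hypothesis of Lemma~\ref{L:centralizing} applies to $TR$, but this is immediate once one observes that $TR$ is affine PI over the commutative noetherian base $\k$.
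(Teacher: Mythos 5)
Your proof is correct and follows exactly the paper's route: two applications of Lemma~\ref{L:centralizing}, to the finite centralizing extensions $T \subseteq TR$ and (when $R$ is noetherian) $R \subseteq TR$ supplied by Lemma~\ref{L:TR}, with the Braun hypothesis on $TR$ checked just as the paper's remark after Lemma~\ref{L:centralizing} indicates (only note that the base is an arbitrary commutative noetherian ring, not necessarily a field $\k$, which changes nothing in the argument).
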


\begin{proof}
Lemma~\ref{L:centralizing}, applied to the finite centralizing extension $T \subseteq TR$ (Lemma~\ref{L:TR}), 
tells us that finiteness of $\GSpec TR$ is equivalent to finiteness of $\GSpec T$. 
If $R$ is noetherian then we may argue in the same way for the finite centralizing extension $R \subseteq TR$. \end{proof}


\section{Main result} \label{S:main}

\emph{Throughout this section, $R$ denotes an affine PI-algebra over an algebraically closed field $\k$
and  $G$ will be an affine algebraic $\k$-group that acts rationally by $\k$-algebra automorphisms on $R$.}


\subsection{Suffient criteria for $G$-rationality} \label{SS:GRat}

By Proposition~\ref{P:finite} we know that $\GSpec R$ is finite if and only if all $G$-primes of $R$ are $G$-rational.
Therefore, $G$-rationality criteria are essential.
As usual, the algebra $R$ will be called $G$-prime if the zero ideal of $R$ is $G$-prime; similarly for
$G$-rationality. 

\begin{lem} \label{L:PIbounded}
Assume that $R$ is $G$-prime.
\begin{enumerate}
\item 
If there is an $N \in \ZZ$ such that $[\soc_G \cen(R):E] \le N$ for all $E \in \irr G$ then 
$R$ is $G$-rational.
\item
If $G$ is connected solvable then $R$ is $G$-rational if and only if 
$[\soc_G \cen(R):E] \le 1$ for all $E \in \irr G$.
\end{enumerate}
\end{lem}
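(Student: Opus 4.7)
The plan is to prove (b) directly via Lie--Kolchin, then to derive (a) by a reduction to the trace ring. I apply Lemma~\ref{L:ratact} to assume $G$ is connected, so that $R$ is prime; put $Z := \cen(R)$ and $F := \cC(R) = Q(Z)$.

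For (b), since $G$ is connected solvable, the irreducible rational $G$-modules are the $1$-dimensional characters $\lambda$, and $[\soc_G Z : \lambda] = \dim Z_\lambda$ for the $\lambda$-eigenspace $Z_\lambda \subseteq Z$. The forward direction $(\Rightarrow)$ is immediate: linearly independent $a, b \in Z_\lambda$ yield $a/b \in F^G \setminus \k$. For the converse, given $t \in F^G$, consider the nonzero $G$-stable ideal $I := \{z \in Z : zt \in Z\}$. As $\soc_G I$ is essential, $I$ contains a nonzero eigenvector $w$ of some weight $\lambda$; then $tw \in Z$ is again of weight $\lambda$, hence lies in $Z_\lambda = \k w$. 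Thus $t \in \k$, since $Z$ is a domain.

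For (a), the hypothesis gives $\dim_\k Z^G = [\soc_G Z : \mathrm{triv}] \le N$; as $Z^G$ is a $\k$-subalgebra of the field $\cC(R)^G$, it is a finite-dimensional integral domain over algebraically closed $\k$, so $Z^G = \k$. The same reasoning is to be transferred to the trace ring $T$: by Lemmas~\ref{L:TR} and~\ref{L:TR2}, $T$ is an affine commutative $G$-stable integral extension of $Z$, and $T = Z \cdot V$ for some finite-dimensional $G$-submodule $V \subseteq T$. A multiplicity estimate based on this presentation, combined with the uniform bound $[Z:E']\le N$, then yields $\dim_\k T^G < \infty$ and hence $T^G = \k$.

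The main obstacle is to promote $T^G = \k$ to $F^G = \k$. Suppose $t \in F^G \setminus \k$; then $t$ is transcendental over $\k$. Form the nonzero $G$-stable ideal $I := \{x \in T : xt \in T\}$ in $T$, pick a nonzero irreducible $G$-submodule $W \subseteq I$ of type $E$, and use $(It)^n = I^n t^n \subseteq T$ to verify $t^n (W I^{m-n}) \subseteq T$ for $0 \le n \le m$. The objective is to show that, for $m$ sufficiently large, the $G$-submodule $\sum_{n=0}^{m} t^n (W I^m) \subseteq T$ realizes strictly more copies of $E$ than allowed by the multiplicity bound on $T$, yielding the desired contradiction. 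Establishing the linear independence of these summands, using the transcendence of $t$ together with a careful choice of $W$, is the crux of the argument.
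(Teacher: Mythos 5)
Your part (b) is essentially fine: the forward direction is the ratio-of-semi-invariants argument, and your converse is the $N=1$ case of (a) done directly on the center. The problem is part (a), which is where the real content lies, and your proposal does not prove it. First, the opening reduction ``assume $G$ connected, so that $R$ is prime'' is not available in (a): there $G$ is an arbitrary affine algebraic group, $R$ is only assumed $G$-prime, and $G$-primeness is not inherited when $G$ is replaced by $G^0$; the statement must be handled with $R$ merely semiprime and $\cC(R)$ the extended centroid. Second, the detour through the trace ring is both flawed and unnecessary. The claim that $T = \cen(R)\cdot V$ for a finite-dimensional $G$-submodule $V$ says precisely that $T$ is a finite module over $\cen(R)$; this would make $TR$ a finite $R$-module and hence, by Lemma~\ref{L:TR}, force $R$ to be noetherian, which is not assumed in (a). Moreover, the hypothesis bounds the \emph{socle} multiplicities $[\soc_G\cen(R):E]$, and such bounds do not pass along a surjection $\cen(R)\otimes V \onto T$ unless $G$ is linearly reductive (socle multiplicities can grow in quotients), so even granting the presentation, your ``multiplicity estimate'' for $T$ is unsubstantiated. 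Third, and decisively, the step you yourself call the crux --- extracting from a transcendental $t\in F^G$ more copies of an irreducible $E$ inside $T$ than the bound permits --- is exactly the missing argument, and it is left open.

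The paper's proof of (a) avoids all of this with one short trick worth internalizing: for $q\in\cC(R)^G$ set $I=\{r\in R\mid qr\in R\}$, a nonzero $G$-stable ideal of $R$; then $J=I^N\cap\cen(R)$ is a nonzero $G$-stable ideal of the center (by \cite[6.1.28]{lR88}, applied to the semiprime ring $R$), and $q^iJ\subseteq\cen(R)$ for $0\le i\le N$. Choose an irreducible $E\into J$. Since $q$ is $G$-invariant, multiplication by $q^0,\dots,q^N$ gives $N+1$ elements of $\Hom_G(E,\cen(R))$, a space of dimension $[\soc_G\cen(R):E]\le N$; hence some nontrivial combination $c=\sum_{i=0}^N k_iq^i$ with $k_i\in\k$ annihilates $E$. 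Nonzero elements of $\cC(R)^G$ are regular in $Q(R)$ (indeed $\cC(R)^G$ is a field by \cite[Prop.~9]{mL08}), so $c=0$; thus $q$ is algebraic over the algebraically closed field $\k$ and therefore $q\in\k$. In other words, rather than trying to contradict transcendence by counting copies of $E$ in the trace ring, use the invariance of $q$ to manufacture too many equivariant maps $E\to\cen(R)$ and let the multiplicity bound produce the algebraic relation directly; no trace ring and no connectedness of $G$ are needed.
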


\begin{proof}
(a) For a given $q \in \cC(R)^G$ put $I = \{ r \in R
\mid qr \in R \}$; this is a nonzero $G$-stable ideal of $R$. Therefore, $J = I^N \cap \cen(R)$ is a 
nonzero $G$-stable ideal of $\cen(R)$; see \cite[6.1.28]{lR88}. 
Note that $q^iJ \subseteq \cen(R)$ for $0 \le i \le N$.
We have $E \into J$ for
some $E \in \irr G$ and multiplication with $q^i$ yields a $G$-equivariant map
$E \into J \to \cen(R)$. 
Since $\dim_\k \Hom_{G}(E, \cen(R)) = [\soc_G \cen(R):E] \le N$, there are $k_i \in \k$, not all $0$,
such that $c = \sum_{i=0}^{N} k_iq^i$ annihilates $E$. 
But nonzero elements of $\cC(R)^G$ are regular in $Q(R)$; so we
must have $c=0$. Thus, $q$ is algebraic over $\k$ and so $q \in \k$.

(b)
The condition is sufficient by part (a).
For the converse, assume that $E_1 \oplus E_2 \subseteq \cen(R)$ for isomorphic $E_i \in \irr G$. 
By the Lie-Kolchin Theorem \cite[III.10.5]{aB91}, 
$E_i = \k x_i$ for suitable $x_i$. Since $x_i$ generates a $G$-stable two-sided ideal, $x_i$ is regular in $R$.
The quotient $x_1x_2^{-1} \in \cC(R)$ is a non-scalar
$G$-invariant; so $R$ is not $G$-rational.
\end{proof}

\begin{remark}
A simplified version of the argument in the proof of (a), without recourse to \cite[6.1.28]{lR88}, establishes the
following general fact:
Let $A$ be an arbitrary (associative) $\k$-algebra and let $G$ be a group that acts on $A$ by locally finite 
$\k$-algebra automorphisms.
If there is an $N \in \ZZ$ such that $[A:E] \le N$ for all finite-dimensional 
irreducible $\k G$-modules $E$ then 
$\GSpec A = \GRat A$.
\end{remark}


\subsection{Regular primes} \label{SS:regular}

Recall from \eqref{E:TR} that if $R$ is prime, then  the classical ring of quotients
$Q(R)$ is a central simple algebra 
over the field of fractions $F= Q(\cen(R))$. The \emph{PI degree} of $R$, by definition, is the degree of this
central simple algebra: 
$\PIdeg R = \sqrt{\dim_{F} Q(R)}$.
For any $P \in \Spec R$, one has $\PIdeg R/P \le \PIdeg R$. The prime $P$ is called \emph{regular}
if equality holds here. The regular primes form an open subset of $\Spec R$. See 
\cite[p.~104]{lR88} or \cite[13.7.2]{jMcCjR87} for all this.

Now let $G$ be an algebraic $\k$-torus.
In particular, $G$ is connected and so $\GSpec R$ consists of the $G$-stable prime ideals of $R$
by Lemma~\ref{L:ratact}.

\begin{lem} \label{L:regular}
Let $G$ be an algebraic $\k$-torus and assume that $R$ is prime. 
Then, for every regular $P \in \GSpec R$, we have $\trdeg_\k \cC(R/P)^G \le \trdeg_\k \cC(R)^G$.
Consequently, if $R$ is $G$-rational then all regular primes in $\GSpec R$ belong to $\GRat R$.
\end{lem}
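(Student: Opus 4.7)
The plan is to reduce the statement to a purely commutative assertion about $\Lambda$-graded affine $\k$-domains, where $\Lambda := X(G) \cong \ZZ^d$, and then to analyse transcendence degrees via the graded structure.

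First I would use the regularity of $P$ together with the Artin--Procesi theorem to produce a \emph{homogeneous} element $s \in C := \cen(R)$ with $s \notin P$ such that $R_s$ is Azumaya over $C_s$. The Azumaya locus of $\Spec C$ is an open subset defined intrinsically from the $C$-module structure of $R$, hence $G$-stable, so its complement is cut out by a $G$-stable (and therefore graded) ideal of $C$; regularity of $P$ places $\fq := P \cap C$ in the Azumaya locus, which yields the required homogeneous $s$. Passing to $R_s$ then gives $\cen((R/P)_{\bar s}) = (C/\fq)_{\bar s}$, and taking fraction fields produces $\cC(R/P) = \Fract(C/\fq)$ with the $G$-action inherited from that on $C/\fq$.

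The problem has thus been reduced to the following: for any $\Lambda$-graded affine $\k$-domain $M$ with graded prime $\fq$ and quotient $N := M/\fq$, show that $\trdeg_\k \Fract(N)^G \le \trdeg_\k \Fract(M)^G$. The key tool is the identity
\[
\trdeg_\k \Fract(X)^G \;=\; \trdeg_\k \Fract(X) \;-\; r(X),
\]
where $r(X)$ denotes the rank of the subgroup of $\Lambda$ generated by $\{\lambda : X_\lambda \ne 0\}$. This is valid for any such $X$: the grading extends to $\Fract(X)$, each homogeneous component is $1$-dimensional over $\Fract(X)^G$, and $\Fract(X)$ becomes a Laurent polynomial ring of rank $r(X)$ over $\Fract(X)^G$. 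To exploit this, let $\Lambda'$ be the subgroup of $\Lambda$ generated by the weights of $N$, let $H \subseteq G$ be the subtorus with $X(G/H) = \Lambda'$, and set $M^H = \bigoplus_{\mu \in \Lambda'} M_\mu$. By Hilbert's finiteness theorem for torus invariants, $M^H$ is a commutative affine $\k$-subdomain of $M$, and evidently $r(M^H) = \rank \Lambda' = r(N)$. Since all weights of $N$ already lie in $\Lambda'$, the projection $M \onto N$ restricts to a surjection $M^H \onto N$.

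Now the assembly: applying the transcendence formula to $M^H$ and $N$ with the common value $r := \rank \Lambda'$, the surjection $M^H \onto N$ yields $\trdeg_\k \Fract(N) \le \trdeg_\k \Fract(M^H)$ and hence $\trdeg_\k \Fract(N)^G \le \trdeg_\k \Fract(M^H)^G$; the obvious inclusion $\Fract(M^H)^G \subseteq \Fract(M)^G$ (any ratio of equal-weight elements of $M^H$ is one in $M$) then gives $\trdeg_\k \cC(R/P)^G \le \trdeg_\k \cC(R)^G$. The ``Consequently'' clause is immediate: $G$-rationality of $R$ says $\cC(R)^G = \k$, so $\trdeg_\k \cC(R/P)^G = 0$, and since $\cC(R/P)^G$ is a field over the algebraically closed $\k$ by~\ref{SSS:Gprime}, it must coincide with $\k$. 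The main obstacle I expect is the Azumaya reduction in the first step---specifically, the $G$-stability (hence gradedness) of the non-Azumaya locus in $\Spec C$ that licenses the homogeneous choice of $s$; thereafter the argument is a clean computation with graded domains in which the torus hypothesis is essential both for the Laurent polynomial description of $\Fract(X)$ and for the cancellation of the ranks $r$.
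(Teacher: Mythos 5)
Your proposal is sound in substance, and its second half is a genuinely different route from the paper's. The first step is essentially the paper's: there, too, regularity of $P$ is used to produce a \emph{homogeneous} central element $c \notin P$ with $R_c$ Azumaya, but this is done via the Formanek center $g_n(R)^+$, which is canonically a $G$-stable ideal of $\cen(R)$ and is not contained in $P$ precisely because $P$ is regular; you should justify your step this way rather than through ``the Azumaya locus of $\Spec \cen(R)$ is open and intrinsically defined'', since $R$ is not assumed to be module-finite over its center and the usual openness statements for the Azumaya locus presuppose that. The Formanek center together with Artin--Procesi delivers exactly the graded ideal you want, so this is a repairable imprecision (which you yourself flagged), not a break in the argument. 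Your identification $\cC(R/P) = \Fract(\cen(R)/\fq)$ for $\fq = P\cap\cen(R)$ then holds as you say.

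From that point on the two arguments diverge. The paper shows directly that the $G$-equivariant epimorphism $\cen(R_\fp) \onto \cC(R/P)$ (with $\fp = P\cap\cen(R)$) restricts to a surjection $\cen(R_\fp)^G \onto \cC(R/P)^G$, by writing any invariant of $\cC(R/P)$ as a ratio of equal-weight semi-invariants that lift; since $\cen(R_\fp)^G \subseteq \cC(R)^G$, the transcendence-degree inequality is immediate, and in fact one gets the slightly stronger statement that $\cC(R/P)^G$ is a homomorphic image of a subring of $\cC(R)^G$. You instead reduce to a purely commutative statement about $\Lambda$-graded domains and prove it via the formula $\trdeg_\k\Fract(X)^G = \trdeg_\k\Fract(X) - r(X)$, using the subalgebra $M^H = \bigoplus_{\mu\in\Lambda'}M_\mu$ to force the two weight-lattice ranks to cancel; that rank-cancellation device is correct and is a nice, reusable graded-commutative lemma. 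Note that your appeal to Hilbert's finiteness theorem is both unnecessary (nothing in the computation uses that $M^H$ is affine, only that the fields involved have finite transcendence degree, which holds because they all embed in $\cC(R)$) and not obviously available, since $\cen(R)$ itself need not be affine. Overall: the paper's proof is shorter and more self-contained at the invariant-field level; yours trades that for a transparent structural statement about graded domains.
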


\begin{proof}
Let $P \in \GSpec R$ be regular. 
Put $n = \PIdeg R$ and let $g_n(R)^+$ denote the Formanek center of $R$;
this is a $G$-stable ideal of 
$\cen(R)$ such that $g_n(R)^+ \nsubseteq P$; see \cite[6.1.37]{lR88} or \cite[13.7.2(i)]{jMcCjR87}. 
Therefore, we may
choose a semi-invariant $c \in g_n(R)^+_\lambda$ with $c \notin P$. The group $G$ acts rationally
on localization $R_c = R[1/c]$ and
$R_c$ is Azumaya by the Artin-Procesi Theorem \cite[13.5.14]{jMcCjR87}.
Therefore, $\cen(R_c)$ maps onto $\cen(R_c/PR_c)$ and  $\cen(R_c)_\lambda$ maps onto 
$\cen(R_c/PR_c)_\lambda$ for all $\lambda \in X(G)$. 
The map $\cen(R_c) \onto \cen(R_c/PR_c)$ extends to a
$G$-equivariant epimorphism $\cen(R_\fp) \onto \cC(R/P) =  Q(\cen(R_c/PR_c))$,
where $\fp = P \cap \cen(R)$. Since $\cen(R_\fp)^G \subseteq \cC(R)^G$, it suffices to show that
$\cen(R_\fp)^G$ maps onto $\cC(R/P)^G$.
But, given $q \in \cC(R/P)^G$, we can find a semi-invariant $0 \ne x \in \cen(R_c/PR_c)_\lambda$
such that $qx\in \cen(R_c/PR_c)$, and we can further find $y,z \in \cen(R_c)_\lambda$ with $y \mapsto x$
and $z \mapsto qx$. Then $zy^{-1} \in \cen(R_\fp)^G$ maps to $q$. This proves the lemma.
\end{proof}


\subsection{Proof of Theorem~\ref{T:torus}} \label{SS:proof}

Let $G$ be an algebraic $\k$-torus and assume that $R$ is prime.
We need to show that $\GSpec R$ is finite if and only if the action of $G$ on 
$\cen(R)$ is multiplicity free.
By Lemma~\ref{L:PIbounded}(b), 
the latter property is
equivalent to $G$-rationality of $R$, and this is certainly necessary for $\GSpec R$ to be finite
by Proposition~\ref{P:finite}.

Now assume that $R$ is $G$-rational. By Proposition~\ref{P:finite} we must show that 
all $G$-primes of $R$ are $G$-rational. Lemma~\ref{L:regular} ensures this for the regular
$G$-primes. In particular, we may assume that $n:= \PIdeg R >1$.
Now consider $P \in \GSpec R$ with $\PIdeg R/P < n$. Then $P$ contains
the ideal $\fa = g_n(R)R \subseteq R$; this is a nonzero $G$-stable common ideal of $R$ and of 
the trace ring $R':= TR$ of $R$; see \cite[6.1.37 and 6.3.28]{lR88}. All primes of $R$ that are
minimal over $\fa$ are $G$-stable. Let $Q$ be one of these primes such that $Q \subseteq P$.
It suffices to show that $Q$ is $G$-rational. For, then we may replace $R$ by $R/Q$, and since
$\PIdeg R/Q < n$, we may argue by induction that $P$ is $G$-rational. 

First, we claim that there exists $Q' \in \GSpec R'$ with $Q' \cap R = Q$. Indeed, choosing $Q'$ to be a
$G$-stable ideal of $R'$ that is maximal subject to the requirement that $Q' \cap R \subseteq Q$,
it is straightforward to see that $Q'$ is $G$-prime. If $Q' \cap R \neq Q$ then $Q' \nsupseteq \fa$ 
by minimality of $Q$ over $\fa$. Thus, $Q' + \fa$ is a $G$-stable ideal of $R'$ which properly
contains $Q'$ and yet also satisfies $(Q' + \fa) \cap R = (Q' \cap R) + \fa \subseteq Q$. 
Since this contradicts our maximal choice of $Q'$, we must have $Q' \cap R = Q$ as claimed.

Next, we show that $Q'$ is $G$-rational. To see this, 
recall from Lemma~\ref{L:TR2} that $G$ acts rationally on the trace rings $T$ and $TR$.
Moreover, $T$ is an affine commutative $\k$-algebra
that is $G$-rational, because $Q(T)^G = \cC(R)^G = \k$.
Therefore, by the case $n=1$, we know that $\GSpec T$ is finite.
By Proposition~\ref{P:TR}, $\GSpec TR$ is finite as well, and in view of Proposition~\ref{P:finite}, 
this forces $Q'$ to be $G$-rational.

Finally, we show that $Q$ is $G$-rational; this will finish the proof. But $\cC(R/Q) \subseteq
\cC(R'/Q')$ and $\cC(R'/Q')^G = \k$ by the foregoing. Therefore, $\cC(R/Q)^G = \k$ as
desired.



\section{Related results} \label{S:noethPI}

\emph{In this section, $R$ and $G$ are as in the previous section and $R$ is also
assumed noetherian.}


\subsection{Actions of reductive groups} \label{SS:rationalPI}

Recall from Lemma~\ref{L:TR2} that
the induced $G$-action on the commutative trace ring $T$ is rational. 
This enables us to quote results from algebraic geometry.

\begin{prop} \label{P:rationalPI}
Assume that $R$ is prime and that $G$ is connected reductive. 
Let $F = Q(\cen(R))$ denote the field of fractions of the center of $R$,
and let $F^B \subseteq F$ denote the invariant
subfield of a Borel subgroup $B \le G$. 
If $F^B = \k$ then $B\text{-}\!\Spec R$ is finite (and hence $\GSpec R$ is finite as well). 
\end{prop}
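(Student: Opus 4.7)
The plan is to reduce to the commutative setting via the trace ring of $R$ and then invoke the classical theorem that a \emph{spherical} $G$-variety admits only finitely many $B$-orbits.

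By Lemma~\ref{L:TR}, the commutative trace ring $T$ is an affine commutative $\k$-algebra, and by Lemma~\ref{L:TR2} the induced $G$-action on $T$ is rational; in particular the $B$-action on $T$ is rational as well. Since $R$ is prime, $T$ is a domain with fraction field $Q(T) = \cC(R) = F$, and the $B$-action on $F$ induced from the action on $T$ agrees with the given one. The affine $G$-variety $X := \Spec T$ therefore satisfies $\k(X)^B = F^B = \k$, which is the definition of $X$ being a spherical $G$-variety.

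Next I appeal to the classical theorem of Vinberg and Brion (extended to arbitrary characteristic by Knop) that a spherical $G$-variety admits only finitely many $B$-orbits. Since $\k$ is algebraically closed, the closed points of $X$ are in $B$-equivariant bijection with $\Rat T$, so $B$ acts with finitely many orbits on $\Rat T$. Proposition~\ref{P:finite}, applied to the affine algebraic group $B$ and the affine commutative PI-algebra $T$, then yields that $B\text{-}\!\Spec T$ is finite. Because $R$ is noetherian, Proposition~\ref{P:TR} (applied with $B$ in place of $G$) transfers this finiteness to $B\text{-}\!\Spec R$. For the parenthetical assertion, since $B \le G$, every $G$-orbit on $\Rat R$ is a union of $B$-orbits; hence finitely many $B$-orbits on $\Rat R$ (equivalent to $B\text{-}\!\Spec R$ being finite by Proposition~\ref{P:finite}) forces finitely many $G$-orbits on $\Rat R$, and a further application of Proposition~\ref{P:finite} gives finiteness of $\GSpec R$.

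The main obstacle is the clean invocation of the spherical variety theorem: as usually formulated it requires a normal variety, whereas $\Spec T$ need not be normal. I would handle this by passing to the integral closure $\widetilde T$ of $T$ in $F$, which is a $G$-stable module-finite extension of $T$ and is thus again an affine commutative $\k$-algebra carrying a rational $B$-action. The finite centralizing extension $T \subseteq \widetilde T$ then allows me to invoke Lemma~\ref{L:centralizing} to replace $T$ by $\widetilde T$, on which the spherical variety theorem applies without qualification in arbitrary characteristic.
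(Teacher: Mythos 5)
Your argument is essentially the paper's own proof: reduce to the commutative trace ring $T$ via Proposition~\ref{P:TR} (using that $R$ is noetherian in this section), note that $T$ is an affine domain with fraction field $F$, invoke Knop's finiteness theorem for spherical varieties to get finitely many $B$-orbits in $\Rat T$, and convert this to finiteness of $B\text{-}\!\Spec T$, hence of $B\text{-}\!\Spec R$, via Proposition~\ref{P:finite}. The only divergence is your normalization detour: the paper cites \cite[Corollary 2.6]{fK95} directly without a normality hypothesis, so your passage to the integral closure $\widetilde T$ and Lemma~\ref{L:centralizing} is a harmless but unnecessary extra step.
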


\begin{proof}
By Proposition~\ref{P:TR},  $B\text{-}\!\Spec R$ is finite if and only if $B\text{-}\!\Spec T$ is finite. Now, $T$
is an affine commutative domain over $\k$ and the field of fractions of $T$ is $F$.
By a standard result on spherical varieties \cite[Corollary 2.6]{fK95}, 
the condition $F^B = \k$ implies that 
there are only finitely many $B$-orbits in $\Rat T$.
The latter fact is equivalent to finiteness of $B\text{-}\!\Spec T$ by Proposition~\ref{P:finite}. 
This proves the proposition.
\end{proof}


\subsection{Catenarity} \label{SS:catenarity}

A partially ordered set $(P,\le)$ is said to be \emph{catenary} if, given any two $x < x'$ in $P$,  all
saturated chains $x = x_0 < x_1 < \dots < x_r = x'$ have the same finite length $r = r(x,x')$.
The following observation, for commutative algebras, goes back to conversations that
I had with R. Rentschler a long time ago; cf. \cite[\S 3]{mpM79}.
As usual, we let $\GKdim$ denote Gelfand-Kirillov dimension.

\begin{prop} \label{P:catenary}
If the connected component of the identity of $G$ solvable then 
the poset $(\GSpec R,\subseteq)$ is catenary. In fact, every
saturated chain $Q=Q_0 \subset Q_1 \subset \dots
\subset Q_r=Q'$ in $\GSpec R$ has length $r = \GKdim R/Q - \GKdim R/Q'$.

\end{prop}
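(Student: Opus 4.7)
The plan is to reduce to Schelter's classical catenarity theorem for affine noetherian PI rings, by showing that consecutive $G$-primes always differ in Gelfand--Kirillov dimension by exactly $1$; the solvability of $G^0$ enters through the Lie--Kolchin theorem.

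After a routine reduction to the case where $G = G^0$ is itself connected and solvable (lifting a saturated chain in $\GSpec R$ to a saturated chain in $G^0\text{-}\!\Spec R$ via the identification of $G$-primes with $\Gamma$-orbits of $G^0$-primes from Lemma~\ref{L:ratact}, and using that $\GKdim R/Q = \GKdim R/Q_0$ when $Q = \bigcap_{x \in \Gamma} x.Q_0$), the set $\GSpec R$ becomes the poset of $G$-stable ordinary primes of $R$. The proposition will then follow by telescoping from the one-step claim: \emph{for consecutive $G$-primes $Q \subsetneq Q'$ in $\GSpec R$ (no $G$-prime lies strictly between), one has $\GKdim R/Q - \GKdim R/Q' = 1$.} Indeed, any saturated chain in $\GSpec R$ will then have length equal to the $\GKdim$ difference of its endpoints, yielding both catenarity and the length formula. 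Passing to $R/Q$, I may assume $R$ is prime, $Q = 0$, and $Q'$ is a nonzero $G$-stable prime admitting no $G$-stable prime strictly between itself and $0$.

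To prove the one-step claim, I invoke Lie--Kolchin. The ideal $Q'$ is a rational $G$-submodule of $R$, and $G$ is connected solvable, so the Lie--Kolchin theorem \cite[III.10.5]{aB91} (as in the proof of Lemma~\ref{L:PIbounded}(b)) furnishes a nonzero $G$-weight vector $x \in Q'$. The two-sided ideal $I := RxR$ is $G$-stable, nonzero, and contained in $Q'$. The finitely many primes of $R$ minimal over $I$ are permuted by $G$, and connectedness forces each to be fixed. Any minimal prime $P$ over $I$ with $P \subseteq Q'$ is therefore $G$-stable, that is, a $G$-prime lying between $0$ and $Q'$. Consecutiveness of $Q'$ forces $P = Q'$, so $Q'$ is minimal over the principal two-sided ideal $RxR$.

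The main obstacle is now to extract $\height Q' \le 1$ from the minimality of $Q'$ over a principal two-sided ideal in an affine prime noetherian PI ring. I would invoke a noncommutative principal-ideal theorem (a PI version of Krull's Hauptidealsatz, available through Schelter's work; see \cite[13.10]{jMcCjR87}); alternatively, one can reduce to the commutative case by passing to the commutative trace ring $T$, using the finite centralizing extension $R \subseteq R' = TR$ and Lemma~\ref{L:TR2} to lift $Q'$ to a $G$-prime of $R'$ and then contract to a $G$-prime of $T$ still minimal over a suitable principal ideal, after which the classical Hauptidealsatz in $T$ plus preservation of heights in finite centralizing extensions gives the bound. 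Combined with $Q' \ne 0$ and Schelter's dimension formula $\height P + \GKdim R/P = \GKdim R$ for affine prime noetherian PI rings, this yields $\GKdim R/Q' = \GKdim R - 1$, completing the proof.
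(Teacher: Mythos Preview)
Your overall strategy matches the paper's: reduce to $G$ connected, show that neighbors in $\GSpec R$ are neighbors in $\Spec R$, then invoke Schelter. The gap is in the height-$1$ step. You apply Lie--Kolchin to $Q'$ itself and obtain a $G$-eigenvector $x\in Q'$ that need not be central or normalizing, and then you want a principal ideal theorem for the two-sided ideal $RxR$. No such theorem exists: in $M_2(\k[s,t])$ take $x=\diag(s,t)$; then $RxR=M_2((s,t))$ is already prime of height $2$, so the prime minimal over $RxR$ has height $2$. The reference to \cite[13.10]{jMcCjR87} does not help, and your alternative trace-ring sketch does not explain which principal ideal of $T$ the contraction $P'\cap T$ should be minimal over, so it is not a proof either.

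The paper closes this gap with one extra observation: in a prime PI ring every nonzero ideal meets the center nontrivially \cite[6.1.28]{lR88}, so $Q'\cap\cen(R)$ is a nonzero $G$-stable ideal of $\cen(R)$. Apply Lie--Kolchin \emph{there} to obtain a $G$-eigenvector $0\neq z\in Q'\cap\cen(R)$. Now $(z)=zR$ is $G$-stable and generated by a central (hence normalizing) element, so the standard noetherian principal ideal theorem \cite[4.1.11]{jMcCjR87} applies. Your minimality argument then shows $Q'$ is a minimal prime over $(z)$, whence $\height Q'=1$, and the rest of your proof goes through unchanged.
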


\begin{proof}
First assume that $G$ is connected; so $\GSpec R$ consists of the $G$-stable primes of $R$.
In view of Schelter's catenarity theorem for $\Spec R$ \cite[6.3.43]{lR88}, it suffices to show
that any two neighbors $Q \subset P$ in $\GSpec R$ are also neighbors when viewed in $\Spec R$.
Passing to $R/Q$ we may assume that the algebra $R$ is prime and $P$ is a minimal nonzero member of
$\GSpec R$, and we need to show that $P$ has height $1$ in $\Spec R$. 
But $P \cap \cen(R)$ is a nonzero $G$-stable ideal of $\cen(R)$ and hence the Lie-Kolchin Theorem
provides us with a $G$-eigenvector $0 \neq z \in P\cap \cen(R)$. The ideal $P$ is a minimal prime
over $(z)$. For, if $(z) \subseteq P' \subset P$ for some $P' \in \Spec R$ then
$(z) \subseteq P'\byG \subset P$ and $P'\byG \in \GSpec R$, contradicting the fact that
$P$ is a minimal nonzero member of $\GSpec R$. Thus $P$ is minimal 
over $(z)$ as claimed, and the principal ideal theorem \cite[4.1.11]{jMcCjR87}
gives that $P$ has height $1$ as desired.

In general, let $G^0$ denote the connected component of the identity of $G$ and
put $\G = G/G^0$. Given $G$-primes $Q \subset Q'$ and
a saturated chain $Q=Q_0 \subset Q_1 \subset \dots
\subset Q_r=Q'$ in $\GSpec R$, we will show that $r = \GKdim R/Q - \GKdim R/Q'$.
To this end, write $Q_i = \bigcap_{x\in \G} x.P_i$ 
for suitable $P_i \in G^0\text{-}\!\Spec R$ as in the proof of Lemma~\ref{L:ratact}.
Since these intersections are finite intersections of $G^0$-primes, we can arrange that
$P_0 \subset P_1 \subset \dots \subset P_r$. This is a saturated chain in $G^0\text{-}\!\Spec R$. 
For, $P_i \subset P \subset P_{i+1}$ implies $Q_i = \bigcap_{x\in \G} x.P_i \subset \bigcap_{x\in \G} x.P \subset 
\bigcap_{x\in \G} x.P_{i+1} = Q_{i+1}$ since $\G$ is finite, which contradicts the fact that $Q_i$ and
$Q_{i+1}$ are neighbors in $\GSpec R$. By the first paragraph of the proof, the chain
$P_0 \subset P_1 \subset \dots \subset P_r$ is also saturated in $\Spec R$, and hence it has length
equal to $r = \GKdim R/P_0 - \GKdim R/P_r$ by Schelter's theorem. Since $\GKdim R/Q_i = 
\GKdim R/P_i$ by \cite[Corollary 3.3]{gKtL00}, the proof is complete.
\end{proof}


\begin{ack}
A preliminary version of some of these results was presented during the workshop at the 
CIMPA school  ``Braids in Algebra, Geometry and Topology'' in
Hanoi (January 17-28, 2011). 
The author would like to thank Ken Goodearl, Friedrich Knop and Claudio Procesi for their
valuable comments and suggestions.
\end{ack}


\bibliographystyle{amsplain}
\bibliography{../bibliography}


\end{document}